\providecommand{\U}[1]{\protect\rule{.1in}{.1in}}
\newtheorem{theorem}{Theorem}[section]
\newtheorem{lemma}[theorem]{Lemma}
\newtheorem{proposition}[theorem]{Proposition}
\theoremstyle{definition}
\newtheorem{definition}[theorem]{Definition}
\newtheorem{remark}[theorem]{Remark}
\numberwithin{equation}{section}
\newcommand{\be}{\begin{equation}}
\newcommand{\ee}{\end{equation}}
\begin{document}
\title[A Family of Iterated Maps on Natural Numbers]{A Family of Iterated Maps on Natural Numbers}
\author [A. DAS]{ANGSUMAN DAS}

\address{Department of Mathematics, Presidency University, Kolkata, India}
\email{\textcolor[rgb]{0.00,0.00,0.84}{angsuman.maths@presiuniv.ac.in}}
\subjclass[2020]{11B83, 11B37, 11A25}
\keywords{Arithmetic functions, Special sequences}

\begin{abstract}
In this paper, we introduce and study the iterates of the following family of functions $\varphi_k$ defined on natural numbers which exhibits nice properties. 
$$\varphi_k(x)=\left\lbrace \begin{array}{ll}
x+k, & \mbox{ if $x$ is prime;}\\
\mbox{largest prime divisor of $x$,} & \mbox{ if $x$ is composite;}
\end{array} \right.$$
In particular, we study the periodic behaviour of the trajectories of these iterated functions. In some cases, we provide proofs of these properties and in some other cases we pose some open problems based on numerical evidences supported by heuristic arguments.
\end{abstract}
\maketitle
\setcounter{page}{1}
\section{Introduction}
The dynamics of arithmetic functions \cite{silverman-book} and in particular, iteration of functions on natural numbers recieved lot of attention since the inception of Collatz function \cite{coll-func-1} and the conjecture regarding Collatz iterates. Since then various generalizations of Collatz function \cite{3x+1} and other constructions \cite{similar-1} of iterated maps appeared in literature. Most of these maps are either proved or conjectured to be eventually periodic on when iterated on any given natural number. A well-known survey and a recent remarkable result on Collatz conjecture can be found here \cite{collatz-book} and \cite{tao-paper}.

In this paper, we introduce a family of functions on natural numbers which exhibits nice periodic properties. While some of the properties of this family of maps are proved, some others has been supported by heuristic arguments. Before describing the family, we recall a few definitions in connection with iterated maps.

\begin{definition}
	Let $f:X \rightarrow X$ be a function.
	\begin{itemize}
		\item For a given $x \in X$, the sequence of values $\{f^n(x)\}=\{x,f(x),f^2(x),\ldots,$ $f^n(x),\ldots \}$ is called the \textit{orbit} or \textit{trajectory} of $x$.
		\item An orbit $\{f^n(x)\}$ is said to be \textit{eventually periodic} or \textit{periodic} if there exists $m,l \in \mathbb{N}$ such that $f^i(x)=f^{i+l}(x)$ for all $i\geq m$.
		\item The smallest such value of $l$ for a given $x$ is called the \textit{period} of the orbit and the point $x$ itself is called a \textit{periodic point}. Any consecutive $l$ terms in $\{f^m(x),f^{m+1}(x),f^{m+2}(x),\ldots, \}$ is called a \textit{loop} or \textit{cycle} in that eventually periodic sequence.
		\item The \textit{stopping time} of $x$ w.r.t $f$ is the largest $n$ such that $x,f(x),f^2(x),\ldots,$ $f^n(x)$ are all distinct.
	\end{itemize}
	
\end{definition}
Now we define the family of functions which is the main topic of interest of this paper.

\begin{definition}
Let $k \in \mathbb{N}$. Define a function $\varphi_k:\mathbb{N}\setminus \{1\} \rightarrow \mathbb{N}\setminus \{1\}$ by $$\varphi_k(x)=\left\lbrace \begin{array}{ll}
x+k, & \mbox{ if $x$ is prime;}\\
\mbox{largest prime divisor of $x$,} & \mbox{ if $x$ is composite;}
\end{array} \right.$$

Also define $S(x_0,k)$ to be the integer sequence $(x_n)$ with first term $x_0$ and subsequent terms are defined by the recursion $x_i=\varphi_k(x_{i-1})$, i.e., $$S(x_0,k)=\{x_0,\varphi_k(x_0),\varphi^2_k(x_0),\varphi^3_k(x_0),\varphi^4_k(x_0),\ldots,,\varphi^n_k(x_0),\ldots \}$$
\end{definition}

We now see some examples which indicates some periodic behaviour of such sequences.
	\begin{itemize}
		\item $S(8,2)=\{8,\mathbf{2,4},2,4,2,4,2,\ldots \}$ is of period $2$ and stopping time $3$.
		\item $S(5,1)=\{5,6,\mathbf{3,4,2},3,4,2,3,4,\ldots \}$ is of period $3$ and stopping time $5$.
		\item $S(2,12)=\{2,14,\mathbf{7,19,31,43,55,11,23,35},7,\ldots \}$ is of period $8$ and stopping time $10$.
	
		\item $S(7,15)=\{\mathbf{7, 22, 11, 26, 13, 28}, 7, 22, 11 \ldots \}$ is of period $6$.
		\item $S(17,15)=\{\mathbf{17, 32, 2}, 17, 32, \ldots \}$ is of period $3$.
	\end{itemize}  The bold entries denote a loop in the corresponding sequence.

We note that $S(x_0,k)$ can have at most $k$ consecutive terms to be prime, because, for any prime $p$, at most $p,p+k,p+2k,\ldots,p+(p-1)k$ can be primes, but $p+pk=(p+1)k$ is composite. So, we must have a composite term within the first $(k+1)$ terms of $S(x_0,k)$. And once we have a composite term, the next term must be a prime. As periodicity of a sequence does not change by deleting first finitely many terms, without loss of generality, it is enough to study the sequences $S(p,k)$, where the first term of the sequence varies over the set of primes. 

\section{Eventual Periodicity, Stopping Time and Loops}
In this section , we discuss the eventual periodicity, stopping time  and loops of $S(p,k)$. In particular, we show that 
\begin{itemize}
	\item $S(p,k)$ is eventually periodic for all $p$ and all $k$.
	\item For suitable choices of $p$ and $k$, the stopping time of $S(p,k)$ can be  arbitrarily large.
	\item For a fixed $k$, the number of ditinct loops in $S(x,k)$ is finite, as $x$ varies.
\end{itemize}

Before the main proofs, we recall a definition and prove few useful lemmas for our aid. For a positive integer $n>1$, the primorial of $n$, denoted by $n\#$, is defined as the product of all primes less than or equal to $n$.

\begin{lemma}\label{primorial-lemma}
	If $p, p+k, p+2k, \ldots,p+(l-1)k$ is an arithmetic progression of $l$ primes with common difference $k$, then $k$ is a multiple of $(l-1)\#$.
\end{lemma}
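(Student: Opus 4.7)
My plan is to establish the result via two observations: first, that the starting prime $p$ is at least $l$, and second, that every prime $q \leq l-1$ divides $k$. Since $(l-1)\#$ is by definition the product of the primes $\leq l-1$ and these primes are pairwise coprime, the second observation yields $(l-1)\# \mid k$ immediately. I assume $k \geq 1$ throughout; the case $k = 0$ is vacuous since $0$ is divisible by anything.

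For the first step, I argue by contradiction. Suppose $p \leq l - 1$. Then $p$ is itself a valid index in $\{0, 1, \ldots, l-1\}$, so the term $p + pk = p(1+k)$ appears in our list of $l$ primes. But $p \geq 2$ and $1 + k \geq 2$ force $p(1+k)$ to be composite, a contradiction. Hence $p \geq l$, and in particular $p > q$ for every prime $q \leq l - 1$.

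For the second step, fix a prime $q \leq l - 1$ and suppose, again for contradiction, that $q \nmid k$. Then $k$ is invertible modulo $q$, so the residues of the first $q$ terms $p, p+k, \ldots, p+(q-1)k$ modulo $q$ are pairwise distinct and form a complete residue system. In particular, exactly one of these $q$ terms, say $p + jk$, is divisible by $q$. By the first step, $p + jk \geq p > q$, so $p + jk$ is a proper multiple of $q$, hence composite --- contradicting the assumption that every entry of the progression is prime. Therefore $q \mid k$.

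The main --- and essentially the only --- subtlety is the first step. Without the bound $p > q$ secured there, the second step breaks down: the term $\equiv 0 \pmod q$ could legitimately equal $q$ itself and thus remain prime, and no contradiction would follow from the modular pigeonhole argument alone. Once $p \geq l$ is in hand, the rest is routine, and the divisibilities $q \mid k$ for all primes $q \leq l-1$ assemble to give $(l-1)\# \mid k$.
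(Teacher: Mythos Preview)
Your proof is correct and follows essentially the same residue-pigeonhole argument as the paper: for each prime $q\le l-1$, if $q\nmid k$ then the terms of the progression hit every residue class mod $q$, so one of them is a nontrivial multiple of $q$. Your explicit first step showing $p\ge l$ is a worthwhile addition---it rules out the possibility that the term divisible by $q$ is $q$ itself, a point the paper's proof leaves implicit.
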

\begin{proof}
	Let $q$ be a prime less than $l$.  If $k$ is not divisible by $q$, then $k$ is a generator of the group $\mathbb{Z}_q$, i.e., in modulo $q$, $\{k,2k,\cdots, qk\}=\mathbb{Z}_q$. Thus there exists $t \in \{1,2,\ldots,q\}$ such that $p+tk$ is divisible by $q$. As $1\leq t\leq q< l$, $p+tk$ is also a prime, a contradiction. Thus $k$ is divisible $q$, and hence is divisible by the product of all primes less than $l$. 
\end{proof}

\begin{lemma}\label{odd-lemma}
Let $k\geq 3$ be an odd integer and $p$ be a prime with $p>k$. Then $\varphi^2_k(p)<p$. 
\end{lemma}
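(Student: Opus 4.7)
The plan is to chase the parity of $p+k$ and exploit that this forces $\varphi_k(p+k)$ to be suitably small.

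First I would observe that since $p$ is prime, $\varphi_k(p) = p+k$, so $\varphi_k^2(p)$ is $\varphi_k(p+k)$. The hypothesis $p > k \geq 3$ forces $p \geq 5$, and in particular $p$ is an odd prime. Combined with $k$ odd, this makes $p+k$ an even integer that is at least $5 + 3 = 8$, hence composite. Therefore $\varphi_k(p+k)$ is defined as the largest prime divisor of $p+k$, call it $q$.

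Next I would split into two cases according to whether $p+k$ is a power of $2$. If $p+k = 2^a$ for some $a \geq 3$, then $q = 2$, and obviously $2 < p$. Otherwise $p+k$ has an odd prime divisor, and the largest one is $q$. Because $p+k$ is even, $p+k$ has a factor of $2$, and since $q$ is odd and coprime to $2$, the integer $2q$ still divides $p+k$. Hence $q \leq (p+k)/2$. Finally, the inequality $(p+k)/2 < p$ is equivalent to $k < p$, which holds by hypothesis. Combining, $\varphi_k^2(p) = q \leq (p+k)/2 < p$.

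There is no real obstacle here: the whole argument is a parity observation plus the elementary bound ``largest prime factor of an even number is at most half of it'' (unless the number is a power of $2$, which is handled separately). The only thing to be careful about is making sure $p+k$ is genuinely composite so that the second clause of $\varphi_k$ applies, which is where the bound $p > k \geq 3$ is used to guarantee $p+k \geq 8$.
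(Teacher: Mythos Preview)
Your proof is correct and follows essentially the same idea as the paper's: $p+k$ is even, so its largest prime factor $q$ satisfies $q\le (p+k)/2<p$ because $k<p$. The paper reaches the same inequality via a contradiction argument (writing $k=2t+1$ and showing any prime divisor of $p+k$ lying in $[p,p+k]$ would force $p\le k$), whereas your direct observation $2q\mid(p+k)$ is a slightly cleaner packaging of the same parity bound.
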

\begin{proof}
	As $p$ is prime and $k$ is odd, we have $\varphi_k(p)=p+k$ is even and hence composite. Thus $\varphi^2_k(p)$ is the largest prime divisor of $p+k$. We show that it is less than $p$. Let $k=2t+1$. As $p$ is odd, all of $p+1,p+3,\ldots,p+(2t-1)$ are even and none of them is prime. Also $p+2t$ does not divide $p+k$. If possible, $\varphi^2_k(p)\geq p$. Then $\varphi^2_k(p)$ must be of the form $p+2\alpha$ where $0\leq \alpha\leq t-1$. However, as $(p+2\alpha)|(p+2t+1)$, we must have $2(p+2\alpha)\leq p+2t+1$, i.e., $p\leq 2t-4\alpha+1\leq 2t+1=k$, a contradiction. Thus $\varphi^2_k(p)<p$.
\end{proof}

\begin{lemma}\label{even-lemma}
	Let $k$ be even and $p$ be a prime with $p>k^2/2$. Then there exists $s \in \mathbb{N}$ such that $\varphi^s_k(p)$ is a prime less than $p$.
\end{lemma}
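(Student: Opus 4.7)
My plan is to iterate $\varphi_k$ starting from $p$, following the arithmetic progression of primes $p, p+k, p+2k, \ldots$ that the orbit traces out until the first composite term is reached, and then read off its largest prime factor. Since $p > k^2/2 \geq 2$ and $k$ is even, $p$ is odd, whence each $p + jk$ is odd. Let $l \geq 1$ be the smallest integer with $p + lk$ composite. Then $p, p+k, \ldots, p+(l-1)k$ are all prime, $\varphi_k^l(p) = p + lk$, and $\varphi_k^{l+1}(p)$ is the largest prime divisor of $p + lk$; I will take $s = l + 1$ and show that this prime is strictly less than $p$.

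To bound $l$, I apply Lemma \ref{primorial-lemma} to the $l$-term prime AP, obtaining $(l-1)\# \mid k$. Equivalently, every prime less than $l$ divides $k$, so $l \leq q$, where $q$ is the smallest prime not dividing $k$. Since $k$ is even, $q$ is odd, so $q \neq k$. For $k \geq 4$ one furthermore has $q < k$: otherwise every prime $\leq k$ would divide $k$, forcing $k\# \mid k$ and hence $k\# \leq k$, but $k\# > k$ for all $k \geq 3$ (directly verifiable, or via Bertrand's postulate). Therefore $l \leq q \leq k - 1$, giving $lk \leq k^2 - k < k^2$.

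To finish, let $P$ denote the largest prime factor of $p + lk$. Because $p + lk$ is odd and composite, its smallest prime factor is at least $3$, so $P \leq (p + lk)/3$. The target inequality $P < p$ is equivalent to $lk < 2p$, and combining the bound $lk < k^2$ above with the hypothesis $2p > k^2$ yields precisely this. Hence $\varphi_k^{l+1}(p) = P$ is a prime strictly less than $p$, proving the lemma for $k \geq 4$.

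The main obstacle lies in the edge case $k = 2$, where $q = 3$ exceeds $k$ and so the bound $l \leq k - 1$ breaks. I would patch this with a residue argument modulo $3$: for $p \geq 5$, one of $p, p+2, p+4$ is a composite multiple of $3$, giving $l \leq 2$ and hence $lk \leq 4 < 2p$ for $p \geq 3$, after which the main argument proceeds as before. The borderline case $p = 3, k = 2$ sits exactly at $p = k^2/2$ and in fact produces the cycle $3 \to 5 \to 7 \to 9 \to 3$ without descending, so a small sharpening of the hypothesis in that corner is implicitly required.
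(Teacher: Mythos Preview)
Your argument follows the same strategy as the paper: set $l$ minimal with $p+lk$ composite, use Lemma~\ref{primorial-lemma} to force $l\le k$, and then bound the largest prime factor of the odd composite $p+lk$ by $(p+lk)/3<p$. The paper compresses the bound on $l$ into the chain $k\ge (l-1)\#\ge l$, whose second inequality in fact fails for $l\in\{2,3\}$; your detour through the least prime $q\nmid k$ and the separate mod-$3$ treatment of $k=2$ is the honest way to close that gap. Your final remark is also on target: $(p,k)=(3,2)$ satisfies $p>k^{2}/2$ yet $S(3,2)=3,5,7,9,3,\ldots$ never drops below $3$, so the lemma as written admits a genuine boundary exception that the paper's proof does not address either.
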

\begin{proof}
	If $\varphi_k(p)=p+k$ is composite, then as $p+k$ is odd, its largest prime factor is $\leq (p+k)/3$, i.e., $\varphi^2_k(p)\leq \frac{p+k}{3}$. Now $p>k^2/2$ implies $p>k/2$, i.e., $p>\frac{p+k}{3}$, i.e., $\varphi^2_k(p)<p$ and the lemma holds.
	
	So, we assume that $p,p+k,p+2k,\ldots,p+(l-1)k$ are all primes and $p+lk$ be composite, i.e., $\varphi^{l-1}_k(p)=p+(l-1)k$, $\varphi^l_k(p)=p+lk$ and $\varphi^{l+1}_k(p)=(p+lk)/3$. Note that we must have $l\leq p$.
	
	As $p, p+k, p+2k, \ldots,p+(l-1)k$ is an arithmetic progression with $l$ terms and common difference $k$, by Lemma \ref{primorial-lemma}, $k$ is a multiple of $(l-1)\#$, i.e., $k\geq (l-1)\#\geq l$.
	
	Now, $p>k^2/2$ implies $2p>k^2\geq kl$, i.e., $\frac{p+lk}{3}<p$, i.e., $\varphi^{l+1}_k(p)=(p+lk)/3<p$.
\end{proof}

Combining the above Lemma \ref{odd-lemma} and \ref{even-lemma}, we get the following proposition.
\begin{proposition}\label{combined-proposition}
	Let $p$ be an odd prime and $k$ be a positive integer such that $p>k^2/2$. Then there exist $s \in \mathbb{N}$ such that $\varphi^s_k(p)$ is a prime less than $p$.
\end{proposition}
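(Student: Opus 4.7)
The plan is to split into cases based on the parity of $k$ and then invoke the two preceding lemmas, which already contain all the real work. Since the hypothesis $p>k^2/2$ is stronger than both $p>k$ (when $k\geq 3$) and $p>k/2$, essentially no new estimates are needed; the proposition is a packaging step.

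First I would handle the case $k$ even. Here the hypothesis $p>k^2/2$ is precisely what Lemma \ref{even-lemma} requires, so it supplies the desired $s$ directly, with $\varphi_k^s(p)$ already asserted to be prime.

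Next, for $k$ odd with $k\geq 3$, I would observe that $p>k^2/2\geq 3k/2>k$, so the hypothesis of Lemma \ref{odd-lemma} is satisfied, giving $\varphi_k^2(p)<p$. Because $p$ is an odd prime and $k$ is odd, $p+k$ is even and at least $4$, hence composite; therefore $\varphi_k^2(p)$ is by definition the largest prime divisor of $p+k$, and so is itself prime. Thus $s=2$ works.

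The one case not covered by either lemma is $k=1$, which must be dispatched by hand. Since $p$ is odd, $p+1$ is even and at least $4$, hence composite, so $\varphi_1^2(p)$ is the largest prime divisor of $p+1$. This is at most $(p+1)/2$, and $(p+1)/2<p$ whenever $p>1$, so again $s=2$ works and produces a prime strictly less than $p$. The only minor obstacle in the whole argument is remembering that Lemma \ref{odd-lemma} is stated under $k\geq 3$, so $k=1$ needs this separate (but trivial) treatment.
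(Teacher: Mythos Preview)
Your proof is correct and follows exactly the paper's approach: the paper's entire proof is the single sentence ``Combining the above Lemma \ref{odd-lemma} and \ref{even-lemma}, we get the following proposition,'' and your argument is the natural unpacking of that sentence into the even/odd cases. In fact you are more careful than the paper, since you notice that Lemma \ref{odd-lemma} is stated only for $k\geq 3$ and therefore dispose of $k=1$ separately; the paper glosses over this edge case.
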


Now, we are in a position to prove the main theorems of this section.
\begin{theorem}
	Let $p$ be a prime and $k$ be a positive integer. Then $S(p,k)$ is eventually periodic.
\end{theorem}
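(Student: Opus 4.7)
The plan is to show that the orbit $S(p,k)$ takes only finitely many distinct values; eventual periodicity then follows from the determinism of $\varphi_{k}$ together with the pigeonhole principle.

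Set $B=k^{2}/2$ and introduce the finite set of ``low'' primes
\[
P_{\mathrm{low}} = \{2\} \cup \{q \text{ prime} : q \leq B\}.
\]
Since every composite term of the orbit is immediately followed by a prime, $S(p,k)$ contains infinitely many primes; list them in order as $q_{1},q_{2},q_{3},\ldots$ with $q_{1}=p$. I first argue that the orbit meets $P_{\mathrm{low}}$. If $q_{1}\in P_{\mathrm{low}}$, there is nothing to prove. Otherwise $q_{1}$ is an odd prime with $q_{1}>B$, and Proposition \ref{combined-proposition} supplies an integer $s\geq 1$ such that $\varphi_{k}^{s}(q_{1})$ is a prime strictly less than $q_{1}$. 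Being on the orbit, this descent prime must equal $q_{j_{1}}$ for some $j_{1}>1$. Iterating the same step produces a strictly decreasing chain $q_{1}>q_{j_{1}}>q_{j_{2}}>\cdots$ of primes drawn from the orbit. By well-ordering of $\mathbb{N}$, the chain terminates, and the only way for the descent step to fail is for the current prime either to equal $2$ or to be at most $B$---that is, for it to lie in $P_{\mathrm{low}}$.

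The identical argument, applied starting from any later $q_{i}$ rather than from $q_{1}$, shows that from every point of the orbit the set $P_{\mathrm{low}}$ is revisited in finitely many further steps. Thus $P_{\mathrm{low}}$ is visited infinitely often, and being finite it admits some prime $q^{*}$ visited at infinitely many times $t_{1}<t_{2}<\cdots$. Determinism of $\varphi_{k}$ then forces $t_{i+1}-t_{i}$ to be constant, and $S(p,k)$ is eventually periodic from time $t_{1}$ onward. The step requiring the most vigilance is the observation that each descent prime $\varphi_{k}^{s}(q_{j_{m}})$ is genuinely some later term of the prime subsequence $q_{1},q_{2},\ldots$; this is immediate because $\varphi_{k}^{s}(q_{j_{m}})$ lies on the orbit by construction and is prime, but without it the whole descent would be marooned outside the orbit and the pigeonhole argument would collapse.
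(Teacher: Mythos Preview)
Your proof is correct and follows essentially the same strategy as the paper: use Proposition~\ref{combined-proposition} to descend from any large prime in the orbit into the finite set of primes below $k^{2}/2$, and then invoke finiteness to force a repetition. Your packaging is in fact a bit cleaner than the paper's---by showing directly that $P_{\mathrm{low}}$ is hit infinitely often and applying pigeonhole, you bypass the paper's separate Claim~2 and its contradiction argument, and your inclusion of $2$ in $P_{\mathrm{low}}$ absorbs the $p=2$ case uniformly rather than treating it at the end.
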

\begin{proof}
	First, we assume that $p$ is an odd prime and prove two claims which will be used in the proof of the theorem.
	
	{\it Claim 1:} $S(p,k)$ has a prime entry in the range $[2,k^2/2]$.\\
	{\it Proof of Claim 1:} If $p<k^2/2$, then the first term of $S(p,k)$ is $p$ and hence it lies in the range $[2,k^2/2]$. If $p>k^2/2$, then by above proposition, there exists $s_1 \in \mathbb{N}$ such that $\varphi^{s_1}_k(p)$ is a prime, say $p_1<p$. If $p_1<k^2/2$, then the claim holds. If $p_1>k^2/2$, then again applying the above proposition, we get $s_2 \in \mathbb{N}$ such that $\varphi^{s_1+s_2}_k(p)=\varphi^{s_2}_k(p_1)$ is a prime, say $p_2<p_1$. Continuing in this way, either we get a prime entry of $S(p,k)$ in $[2,k^2/2]$ or we get infinitely many primes with $p>p_1>p_2>\cdots >p_n>\cdots >k^2/2$. Since, there are only finitely many primes in $[k^2/2,p]$, the later can not hold, thereby proving the claim.
	
	As deleting first finitely many terms does not affect the eventual periodicity of a sequence, from Claim 1, we can assume that $p<k^2/2$.
	
	{\it Claim 2:} If all the prime entries of $S(p,k)$ are in $[2,k^2/2]$, then $S(p,k)$ is eventually periodic.\\
	{\it Proof of Claim 2:} If all the prime entries of $S(p,k)$ are in $[2,k^2/2]$, then exactly one of the two cases mentioned below can occur:
	\begin{enumerate}
		\item $S(p,k)\subseteq [2,k^2/2]$: An infinite iterated sequence taking value from a finite set must be eventually periodic.
		\item The prime entries of $S(p,k)$ lies in $[2,k^2/2]$ and $S(p,k)$ admits only composite values outside $[2,k^2/2]$ with all of those composite entries having their largest prime factors in $[2,k^2/2]$: In this case, as $[2,k^2/2]$ have finitely many primes, $S(p,k)$ must be eventually periodic.
	\end{enumerate}
Let, if possible, $S(p,k)$ is not eventually periodic. Then by Claim 2, $S(p,k)$ takes some prime entries outside $[2,k^2/2]$.  Thus, by the above proposition, some term of $S(p,k)$ has to re-enter $[2,k^2/2]$ with a prime entry, say $p_1<p$. Again, arguing as above, if all the prime entries of $S(p_1,k)$ are in $[2,k^2/2]$, then $S(p_1,k)$ and hence $S(p,k)$ is eventually periodic, a contradiction. Thus, $S(p_1,k)$ takes some prime entries outside $[2,k^2/2]$ and it has to re-enter $[2,k^2/2]$ with a prime entry, say $p_2<p_1$. Continuing in this way, we get distinct primes $p_1,p_2,p_3,\ldots \in [2,k^2/2]$. However, as $[2,k]$ has finitely many primes, this leads to a contradiction. 

Thus the theorem holds for all odd primes $p$. If $p=2$ and $k=1$ or $2^n-2$, we get the loops $2,3,4$ and $2,2^n$ respectively. For $p=2$ and other $k$'s, $\varphi_k(2)=2+k$ is either a prime $>2$ or a composite with largest prime factor $>2$. In any case, it is eventually periodic by the above case of odd primes.  
\end{proof}

\begin{theorem}
	 For all $l\geq 2$, there exists a prime $p$ and $k \in \mathbb{N}$ such that $S(x,k)$ has stopping time $>l$.
\end{theorem}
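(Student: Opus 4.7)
\medskip
\noindent\textbf{Proof plan.} The plan is to engineer a starting prime $p$ and increment $k$ so that the iterates of $\varphi_k$ starting at $p$ simply walk up a long arithmetic progression of primes. While $p+ik$ is prime, the rule $\varphi_k(q)=q+k$ forces $\varphi_k^{i+1}(p)=\varphi_k^{i}(p)+k$, so successive iterates differ by $k$ and are therefore pairwise distinct; the stopping time of $S(p,k)$ is at least one less than the length of the longest initial run of primes in the orbit.

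Fix $l\geq 2$. I would first invoke the Green--Tao theorem, which asserts that the primes contain arbitrarily long arithmetic progressions, to obtain a prime $p$ and a positive integer $k$ for which each of $p,\,p+k,\,p+2k,\,\ldots,\,p+(l+1)k$ is prime. A straightforward induction on $i$ then gives $\varphi_k^{i}(p)=p+ik$ for $0\leq i\leq l+1$, because at each stage the current term is prime and $\varphi_k$ adds $k$ to primes. These $l+2$ values form an arithmetic progression with nonzero common difference, hence are pairwise distinct, so $x_0,x_1,\ldots,x_{l+1}$ are $l+2$ distinct initial terms of $S(p,k)$. By the paper's definition the stopping time is the largest $n$ for which $x_0,\ldots,x_n$ are all distinct, which here is at least $l+1>l$.

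The entire substance of the argument is concentrated in the appeal to Green--Tao; the rest is bookkeeping. The main obstacle is therefore the need for an existence statement about arbitrarily long prime APs, which in a uniform form over all $l$ essentially forces one to cite Green--Tao. For any individual small $l$ one could sidestep this by exhibiting a specific long prime AP by hand (many are explicitly known), but for the theorem as stated, invoking Green--Tao appears to be the cleanest route.
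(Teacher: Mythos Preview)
Your proposal is correct and follows essentially the same approach as the paper: invoke the Green--Tao theorem to obtain a long arithmetic progression of primes $p,p+k,\ldots$, observe that $\varphi_k$ simply walks along this AP, and conclude that the initial segment of $S(p,k)$ consists of many distinct terms. If anything, your bookkeeping is slightly more careful than the paper's, since you explicitly take an AP of length $l+2$ to guarantee stopping time at least $l+1>l$, whereas the paper takes an AP of length $l$ and asserts stopping time $>l$ without tracking the off-by-one.
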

\begin{proof}
	The Green-Tao theorem \cite{green-tao} states that the sequence of prime numbers contains arbitrarily long arithmetic progressions. In other words, for every natural number $l$, there exist arithmetic progressions of primes with $l$ terms. Let the arithmetic progression starts with $p$ and common difference be $k$. Thus $S(p,k)$ has the first $l$ times as distinct primes. As $S(p,k)$ is eventually periodic, its stopping time is greater than $l$.
\end{proof}

Let us recall that $S(7,15)$ eventually enters the loop $7,22,11,26,13,28$ and $S(17,15)$ eventually enters the loop $17,32,2$. Thus for a fixed $k$, $S(x,k)$ may enter different loops if we vary $x$. So, the natural question is: For a fixed $k$, how many distinct loops are there in $S(x,k)$ where $x$ varies over $\mathbb{N}\setminus \{1\}$? More specifically, is the number of such possible distinct loops finite? In the next theorem, we answer this question affirmatively.

\begin{theorem}
Let $k \in \mathbb{N}$ be fixed. The number of distinct loops in $S(x,k)$ as $x$ varies is finite.
\end{theorem}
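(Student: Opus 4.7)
The plan is to show that every loop must contain a prime from a small finite set, and then to use the determinism of $\varphi_k$ to turn this into a finite count.

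First, I would record two elementary observations. Since $\varphi_k$ is a single-valued function, the forward orbit of any point is uniquely determined, so any two loops that share even one point must coincide; in particular, distinct loops are pairwise disjoint. Moreover, every loop contains at least one prime, because if $c$ is a composite value in the loop, then $\varphi_k(c)$, the largest prime divisor of $c$, is prime and also lies in the loop.

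The main step is the claim that every loop $L$ contains a prime in the finite set
\[
T_k \;:=\; \{2\} \,\cup\, \{\,p \text{ prime} : p \leq k^2/2\,\}.
\]
To prove it, let $p_0$ be the smallest prime appearing in $L$. If $p_0=2$ we are done. Otherwise $p_0$ is odd, and if we had $p_0>k^2/2$, then Proposition \ref{combined-proposition} would supply $s\in\mathbb{N}$ with $\varphi_k^{s}(p_0)$ a prime strictly smaller than $p_0$; since $L$ is forward-invariant, this smaller prime would also lie in $L$, contradicting the minimality of $p_0$. Hence $p_0\leq k^2/2$, i.e.\ $p_0\in T_k$.

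Combining the pieces, the assignment sending each loop $L$ to its smallest prime yields a well-defined map from the set of loops into $T_k$, and by the disjointness of distinct loops this map is injective. Since $T_k$ is a finite set, there can be only finitely many loops. The one point that needs a moment of care is that Proposition \ref{combined-proposition} is stated for odd primes, so the case $p_0=2$ must be handled separately; but since $2\in T_k$ unconditionally, this case requires no additional argument, and so I do not expect a genuine obstacle in the proof.
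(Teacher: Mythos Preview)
Your proof is correct and follows essentially the same approach as the paper: both use Proposition~\ref{combined-proposition} to show that every loop must contain a prime at most $k^2/2$, and then invoke the finiteness of such primes (together with the disjointness of distinct loops) to bound the number of loops. Your packaging via the minimal prime in the loop is slightly tidier than the paper's iterated-descent argument, but the substance is the same.
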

\begin{proof}
	As mentioned earlier it is enough to vary $x$ over the set of primes. First, we consider the primes $p>k^2/2$. By Proposition \ref{combined-proposition}, there exists $s_1 \in \mathbb{N}$ such that $\varphi^{s_1}_k(p)=p_1<p$, where $p_1$ is a prime. If $p_1>k^2/2$, we apply the same proposition to get $s_2$ such that $\varphi^{s_1+s_2}_k(p)=\varphi^{s_2}_k(p_1)=p_2<p_1<p$. As the number of primes in $[k^2/2,p]$ is finite, this process terminates finitely. Thus, for any prime $p>k^2/2$, there exists $s_p \in \mathbb{N}$ (depending upon $p$) such that $\varphi^{s_p}_k(p)$ is a prime in $[2,k^2/2]$.
	
	Now, there are finitely many, say $t$ primes in $[2,k^2/2]$  and as each loop contains at least one of those $t$ primes, we can have at most $t$ distinct loops. Hence the theorem holds.
\end{proof}

\begin{remark}
	It is to be noted that for a fixed $k$, any loop must have at least one prime entry less than $k^2/2$. Thus the number of loops is bounded above by the number of primes less than $k^2/2$. In fact, we can make this a little tighter. In the proof of Lemma \ref{even-lemma}, we used the approximation, that $(l-1)\#\geq l$ for $l\geq 3$. However, for $l\geq 7$, we have $(l-1)\#\geq l^2$. By separately considering the cases, when $l\leq 6$ and using $(l-1)\#\geq l^2$ for rest of the cases, it can be shown that any loop must take at least one prime entry less than $k\sqrt{k}/2$. Thus the number of loops for a given $k$ is bounded above by the number of primes $\leq k\sqrt{k}/2$.
\end{remark}	
	
\section{Open Issues}
In the last remark, we have seen that for a given $k$, the number of distinct loops in $S(x,k)$ is finite. However, if we vary $k$, the number of distinct loops of $S(x,k)$ seems to be unbounded above. This is demonstrated in Figure \ref{k-vs-loops}. For $k=1\ldots 5000$, we plot the number of distinct loops. It is evident from the graph that smaller the number of loops, it is achieved by more values of $k$. In this range, the maximum number of loops $(14)$ is achieved only by $k=4479$.

\begin{figure}[ht]
	\centering
	\begin{center}
		\includegraphics[scale=0.6]{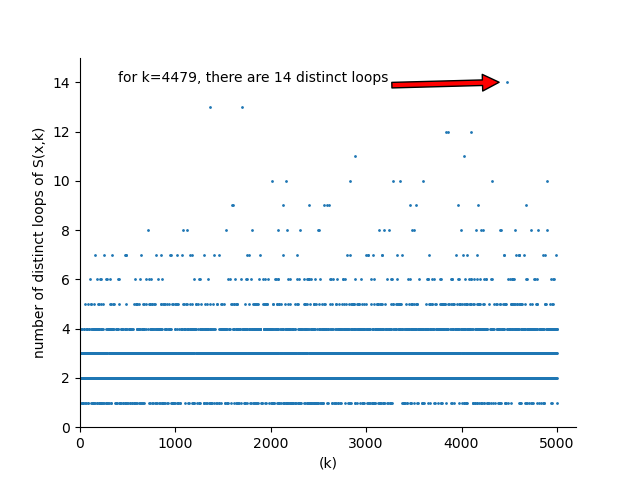}
		\caption{The $k$-vs-no.of.loops plot}
		\label{k-vs-loops}
	\end{center}
\end{figure}

Another property which is observed is that periods of all lengths are attained by some sequence. Figure \ref{l-vs-k} below shows the least value of $k$ for which a sequence of period $l=1\ldots 50$ is obtained. It is to be noted that $l=49$ is first achieved for $k=1428$. This indicates that the least value of $k$ increases sharply for small increase in $l$. 
\begin{figure}[ht]
	\centering
	\begin{center}
		\includegraphics[scale=0.6]{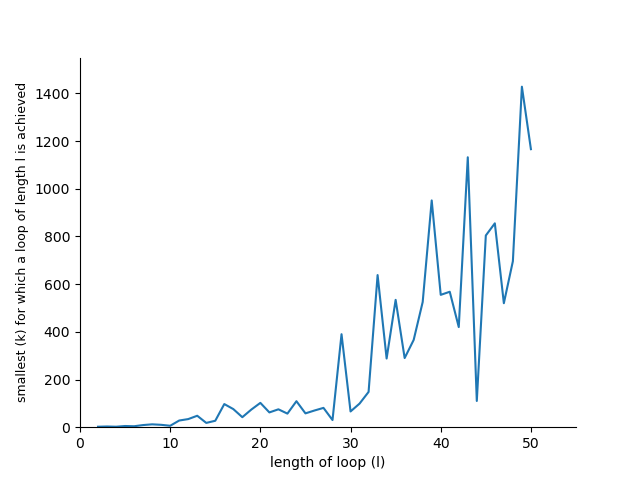}
		\caption{The $l$-vs-$k$ plot}
		\label{l-vs-k}
	\end{center}
\end{figure}
Based on the above observations regarding numerical evidences, we conclude this paper with two open issues:

\begin{enumerate}
	\item For any $m \in \mathbb{N}$, there exists $k\in \mathbb{N}$ such that number of distinct loops of $S(x,k)$ is greater than $m$.
	\item For all $l\geq 2$, there exists a prime $p$ and $k \in \mathbb{N}$ such that $S(p,k)$ is eventually periodic with period $l$.
\end{enumerate}

\section*{Acknowledgements}
The author acknowledge the funding of DST-FIST Sanction no. $SR/FST/MS-I/2019/41$ and DST-SERB-MATRICS Sanction no. $MTR/2022/000020$, Govt. of India. The author is also thankful to Hiranya Kishore Dey and Anubrato Bhattacharya for various fruitful discussions, and the weekly departmental seminar series QED of Presidency University for providing a stimulus to think about this problem.


\begin{thebibliography}{99}                                                              %
\bibitem{coll-func-1} C.J. Everett, Iteration of the number-theoretic function $f(2n) = n, f(2n + 1) = 3n + 2$, \textit{Advances in Mathematics} {\bf 25(1)}, pp.42--45, 1977.
\bibitem{green-tao} B. Green and T. Tao, The primes contain arbitrarily long arithmetic progressions, \textit{Annals of Mathematics}, 167(2), pp. 481--547, 2008.
\bibitem{similar-1} D.A. Klarner and R. Rado, Arithmetic properties of certain recursively defined sets, \textit{Pacific J. Math.} 53, No. 2, pp. 445--463, 1974.
\bibitem{3x+1} J.C. Lagarias, The $3x + 1$ problem and its generalizations, \textit{Amer. Math. Monthly} 92, pp. 3--23, 1985.
\bibitem{collatz-book} J.C. Lagarias (Ed.), The Ultimate Challenge: The $3x+1$ Problem, \textit{American Mathematical Society}, 344pp, 2010.
\bibitem{silverman-book} J.H. Silverman, The Arithmetic of Dynamical Systems, \textit{Springer Graduate Texts in Mathematics}, 241, 2007.
\bibitem{tao-paper} T. Tao, Almost all orbits of the Collatz map attain almost bounded values, \textit{Forum Math. Pi}, 10, Paper No. e12, 56 pp, 2022.
\end{thebibliography}
\end{document}